\documentclass[11pt]{article}

\usepackage[a4paper,margin=1in]{geometry}
\usepackage{amsmath,amsthm,amssymb,amsfonts}
\usepackage{mathtools}
\usepackage{hyperref}
\usepackage{tikz}
\usetikzlibrary{trees}

\newtheorem{theorem}{Theorem}[section]
\newtheorem{corollary}[theorem]{Corollary}
\newtheorem{definition}[theorem]{Definition}
\newtheorem{remark}[theorem]{Remark}

\newtheorem{proposition}[theorem]{Proposition}
\newtheorem{example}[theorem]{Example}

\newcommand{\E}{\mathcal E}

\title{Translation Monoids and Recursive Evaluation \\ in Finite Binary Algebras}
\author{Volkan Yildiz\\[0.3em]
\texttt{vo1kan@hotmail.co.uk}}
\date{\today}

\begin{document}

\maketitle

\begin{abstract}
Let \(A=(A,\star)\) be a finite binary algebra, not necessarily associative.
For each \(n\geq 1\), every full binary bracketing on \(x_1,\dots,x_n\)
determines an \(n\)-ary term operation on \(A\), and hence an evaluation word
obtained by listing its values on \(A^n\) in lexicographic order. This produces
an \(m^n\times C_{n-1}\) array, where \(m=|A|\) and \(C_{n-1}\) is the
\((n-1)\)st Catalan number. We show that the recursive structure of these
arrays is governed by the translation monoid
\[
T(A)=\langle L_a,R_a:a\in A\rangle\leq A^A,
\qquad
L_a(x)=a\star x,\quad R_a(x)=x\star a.
\]
More precisely, context maps arising from subterms are exactly the elements of
\(T(A)\), so every element of the translation monoid occurs as a recursive
block map. We also prove that rank defines a natural chain of two-sided ideals
in \(T(A)\), that the minimum-rank elements form a minimal nonempty two-sided
ideal, and that Green's \(\mathcal J\)-classes are contained in rank layers.
Finally, we show by example that equal rank does not determine the
\(\mathcal J\)-class in general.
\end{abstract}

\vfill
\noindent\textbf{Keywords:} finite binary algebras, Catalan bracketings, term operations,
evaluation words, translation monoids, context maps, recursive blocks, transformation semigroups,
Green's relations, minimal ideals.

\medskip
\noindent\textbf{AMS 2020 Mathematics Subject Classification:}\\
Primary 20M20, 08A40.\\
Secondary 05A15, 20M17, 08A05.

\newpage

\newcommand
\HRule{\noindent\rule{\linewidth}{0.5pt}}

\HRule
{\small
\begin{flushright}
{\em ``Dayê, bêhna sêvê tê ...”}
\end{flushright}
}
\HRule

\section{Introduction}

Binary bracketings form one of the most classical Catalan families in algebra.
For each \(n\geq 1\), the set of full binary bracketings on
\(x_1,\dots,x_n\) has cardinality \(C_{n-1}\), the \((n-1)\)st Catalan number.
Once a binary operation is fixed, every such bracketing determines an \(n\)-ary
term operation, and hence an associated evaluation word on the underlying
algebra.

The present paper studies these evaluation words for a finite binary algebra
\(A=(A,\star)\). Fixing an ordering of \(A\), each bracketing determines a word
obtained by listing the values of the induced term operation on \(A^n\) in
lexicographic order. In this way one obtains, for each \(n\), a family of words
indexed by the Catalan set of bracketings. Our aim is to understand the
recursive structure of these words.

The present paper is motivated in part by our earlier study of bracketed
Kleene implications, ~\cite{Yildiz2021}, where the Catalan family of bracketings was analysed at
the level of three-valued truth tables and their associated generating
functions. Here we replace that special setting by a general finite binary
algebra and focus on the translation monoid governing recursive context maps.

The key observation is that when a bracketing is decomposed at a subterm, the
corresponding evaluation word decomposes into blocks, and these blocks are
controlled by unary maps obtained by repeatedly combining with fixed constants
along the chain of enclosing binary operations. For example, consider the bracketing
\[
t=(x_1\star x_2)\star x_3
\]
and the subterm \(u=x_2\). After fixing the variables outside \(u\), say
\[
x_1=0,\qquad x_3=1,
\]
the whole term becomes a unary function of the value of \(u\), namely
\[
x\longmapsto (0\star x)\star 1.
\]
This is a composition of a left translation and a right translation:
\[
x\longmapsto (0\star x)\star 1 = R_1\circ L_0(x).
\]
Indeed,
\[
(R_1\circ L_0)(x)=R_1(0\star x)=(0\star x)\star 1.
\]
Hence the block of the evaluation word of \(t\) obtained by fixing
\[
x_1=0,\qquad x_3=1
\]
is obtained from the evaluation word of \(u\) by coordinatewise action of the
map \(R_1\circ L_0\in T(A)\). This leads naturally to the transformation monoid
\[
T(A)=\langle L_a,R_a : a\in A\rangle \leq A^A,
\qquad
L_a(x)=a\star x,\quad R_a(x)=x\star a,
\]
which we call the translation monoid of \(A\).

The present paper lies between two established lines of research. The first is
the study of Catalan bracketings and the term operations they determine; see,
for example, the literature on associative spectra and its later developments
\cite{CsakanyWaldhauser2000,HuangLehtonen2023}. The second is the study of
translation monoids and context-induced unary maps of algebras
\cite{Petkovic2006,Piirainen2013}. Our contribution is to connect these two
 viewpoints through the recursive evaluation arrays of finite binary algebras.
We show that recursive blocks in bracketing-evaluation arrays are governed 
by the translation monoid, thereby turning a Catalan bracketing problem into 
a natural transformation-semigroup problem.

Our main structural result shows that if \(u\) is a subterm of a bracketing
\(t\), then after fixing all variables outside \(u\), the induced unary map from
the value of \(u\) to the value of \(t\) belongs to \(T(A)\). More precisely,
it is a composition of left and right translations determined by the enclosing
chain from \(u\) to \(t\). As a consequence, every recursive block in every
bracketing-evaluation array is obtained by coordinatewise action of an element
of \(T(A)\) on the evaluation word of a subterm.

This identifies the translation monoid as the natural algebraic object
governing recursive evaluation in finite binary algebras. It also opens a
semigroup-theoretic perspective on the problem: one may ask how the internal
structure of \(T(A)\), including its ideals and Green classes, reflects the recursive 
complexity of the associated evaluation arrays. In the present paper
we focus on the foundational step, namely the passage from subterms to
translations.

The paper is organised as follows. In Section~2 we introduce bracketing
evaluation arrays, give a concrete example, and define the translation monoid.
We then formalise subterm context maps and prove that they all belong to
\(T(A)\). Section~3 shows that the realised context maps coincide with the whole
translation monoid and develops the first semigroup-theoretic consequences of
this construction.

\section{Bracketings, evaluation words, and translation monoids}

For each integer \(n\geq 1\), let \(\mathcal B_n\) denote the set of all full
binary bracketings on the variables
\[
x_1,\dots,x_n,
\]
with the order of variables preserved. Thus \(\mathcal B_n\) may be identified
with the set of all binary terms obtained from \(x_1,\dots,x_n\) by inserting
parentheses in all possible ways, and
\[
|\mathcal B_n|=C_{n-1},
\]
where \(C_{n-1}\) is the \((n-1)\)st Catalan number.

\begin{definition}
Let \(A=(A,\star)\) be a finite binary algebra, (not assumed to be associative), and fix once and for all an
ordering
\[
A=\{a_1,\dots,a_m\},
\qquad m=|A|.
\]

For \(t\in\mathcal B_n\), let
\[
t^A:A^n\to A
\]
denote the \(n\)-ary term operation determined by \(t\) in \(A\).

The \emph{evaluation word} of \(t\) over \(A\), denoted \(w_t\), is the word of
length \(m^n\) obtained by listing the values of \(t^A\) on all tuples of
\(A^n\) in lexicographic order relative to the fixed ordering of \(A\). The
family
\[
\mathcal E_n(A)=(w_t)_{t\in \mathcal B_n}
\]
will be called the \emph{\(n\)th bracketing-evaluation array} of \(A\).
\end{definition}

Thus \(\mathcal E_n(A)\) may be viewed either as an indexed family of words over
the alphabet \(A\), or as an \(m^n\times C_{n-1}\) array whose columns are
indexed by binary bracketings. The combinatorics of bracketings supplies the
Catalan indexing, while the algebra \(A\) determines the resulting words and the
relations among them.

To understand the recursive structure of the words \(w_t\), it is useful to
look at what happens when one focuses on a subterm \(u\) inside a bracketing
\(t\). After fixing values for all variables outside \(u\), the whole term \(t\)
becomes a unary function of the value of \(u\). Thus each occurrence of a
subterm gives rise to a map \(A\to A\). The simplest such maps are obtained by
multiplying a variable value on the left or on the right by a fixed element of
\(A\), namely
\[
L_a(x)=a\star x,\qquad R_a(x)=x\star a.
\]

\begin{definition}
For each \(a\in A\), define maps \(L_a,R_a:A\to A\) by
\[
L_a(x)=a\star x,
\qquad
R_a(x)=x\star a
\qquad (x\in A).
\]
The submonoid of the full transformation monoid \(A^A\) generated by these maps
will be denoted by
\[
T(A)=\langle L_a,R_a : a\in A\rangle \leq A^A
\]
and called the \emph{translation monoid} of \(A\).
\end{definition}

We next formalise the unary maps arising from subterm occurrences.

\begin{definition}
Let \(t\in \mathcal B_n\), and let \(u\) be a subterm occurrence of \(t\). Let
the variables occurring in \(u\) be regarded as internal to \(u\), and all other
variables of \(t\) as external to \(u\). After fixing values of the external
variables, the term \(t\) determines a unary map
\[
\kappa_{t,u,\mathbf a}:A\to A,
\]
where \(\mathbf a\) records the chosen values of the external variables, by
evaluating \(u\) as a single input variable and then evaluating the whole term
\(t\). We call \(\kappa_{t,u,\mathbf a}\) the \emph{context map} of the
occurrence \(u\) in \(t\) relative to the external assignment \(\mathbf a\).
\end{definition}

For a subterm occurrence \(u\) of a bracketing \(t\), let \(x_{i_1},\dots,x_{i_k}\) be the
variables occurring in \(u\), in their inherited left-to-right order. By relabelling these
variables as \(x_1,\dots,x_k\), the occurrence \(u\) determines a bracketing
\(\widetilde{u}\in B_k\). We define the evaluation word of the occurrence \(u\) to be
\[
w_u:=w_{\widetilde{u}}.
\]
Equivalently, \(w_u\) is obtained by listing the values of the induced term operation of \(u\)
as its internal variables vary over \(A^k\) in lexicographic order.

\begin{example}
The following example illustrates both the evaluation words and the
subterm-to-root context maps. Let
\[
A=\{0,1\},
\]
equipped with the binary operation \(\star\) given by
\[
\begin{array}{c|cc}
\star & 0 & 1\\ \hline
0 & 1 & 1\\
1 & 1 & 0
\end{array}
\]
and fix the order \(0<1\) on \(A\).
For \(n=3\), the set \(\mathcal B_3\) consists of the two bracketings
\[
t_1=x_1\star(x_2\star x_3),
\qquad
t_2=(x_1\star x_2)\star x_3.
\]
Listing the elements of \(A^3\) in lexicographic order,
\[
(0,0,0),(0,0,1),(0,1,0),(0,1,1),
(1,0,0),(1,0,1),(1,1,0),(1,1,1),
\]
we obtain
\[
w_{t_1}=11110001,
\qquad
w_{t_2}=10101011.
\]
Thus
\[
\mathcal E_3(A)=\bigl(w_{t_1},w_{t_2}\bigr)
=\bigl(11110001,\;10101011\bigr),
\]
so the two bracketings induce different ternary term operations on \(A\).
\newpage

Now let \(u=x_2\), viewed as a subtree occurrence in each bracketing. The two
bracketing trees are shown below, with the path from \(u\) to the root marked in
blue.

\begin{center}
\begin{tikzpicture}[
  baseline=(current bounding box.center),
  op/.style={circle,draw,inner sep=1.2pt},
  leaf/.style={inner sep=1pt},
  marked/.style={draw,rounded corners,fill=blue!12,inner sep=2pt}
]

\node at (-2.1,2.2) {$t_1=x_1\star(x_2\star x_3)$};

\node[op]   (r1) at (-2.1,1.5) {$\star$};
\node[leaf] (a1) at (-3.0,0.7) {$x_1$};
\node[op]   (b1) at (-1.2,0.7) {$\star$};
\node[marked] (u1) at (-1.7,-0.1) {$x_2$};
\node[leaf] (c1) at (-0.7,-0.1) {$x_3$};

\draw (r1) -- (a1);
\draw (r1) -- (b1);
\draw (b1) -- (u1);
\draw (b1) -- (c1);
\draw[blue,very thick] (u1) -- (b1) -- (r1);

\node at (2.1,2.2) {$t_2=(x_1\star x_2)\star x_3$};

\node[op]   (r2) at (2.1,1.5) {$\star$};
\node[op]   (b2) at (1.2,0.7) {$\star$};
\node[leaf] (c2) at (3.0,0.7) {$x_3$};
\node[leaf] (a2) at (0.7,-0.1) {$x_1$};
\node[marked] (u2) at (1.7,-0.1) {$x_2$};

\draw (r2) -- (b2);
\draw (r2) -- (c2);
\draw (b2) -- (a2);
\draw (b2) -- (u2);
\draw[blue,very thick] (u2) -- (b2) -- (r2);

\end{tikzpicture}
\end{center}

Fix the external variables by
\[
x_1=0,
\qquad
x_3=1.
\]
Then the induced context maps are
\[
\kappa_{t_1,u}(x)=0\star(x\star 1)=L_0\circ R_1(x),
\]
and
\[
\kappa_{t_2,u}(x)=(0\star x)\star 1=R_1\circ L_0(x).
\]

Since
\[
L_0(x)=0\star x=1
\quad\text{for all }x\in A,
\]
and
\[
R_1(0)=0\star 1=1,\qquad R_1(1)=1\star 1=0,
\]
we obtain
\[
\kappa_{t_1,u}(0)=1,\qquad \kappa_{t_1,u}(1)=1,
\]
so \(\kappa_{t_1,u}\) is the constant map \(1\), while
\[
\kappa_{t_2,u}(0)=0,\qquad \kappa_{t_2,u}(1)=0,
\]
so \(\kappa_{t_2,u}\) is the constant map \(0\).

Finally, the evaluation word of the subtree \(u=x_2\) is
\[
w_u=01.
\]
Hence the corresponding recursive blocks are
\[
\kappa_{t_1,u}(w_u)=11,
\qquad
\kappa_{t_2,u}(w_u)=00.
\]

This example shows, in a single setting, how bracketings determine evaluation
words and how subtree-to-root paths determine context maps as compositions of
left and right translations.
\end{example}

The main point is that these context maps do not range arbitrarily over all
unary transformations of \(A\). They are already forced to lie in the
translation monoid.

\begin{theorem}\label{thm:context-in-translation-monoid}
Let \(A=(A,\star)\) be a finite binary algebra, let \(t\in \mathcal B_n\), and
let \(u\) be a subterm occurrence of \(t\). For every choice of values of the
variables external to \(u\), the associated context map
\[
\kappa_{t,u,\mathbf a}:A\to A
\]
belongs to the translation monoid \(T(A)\). More precisely,
\(\kappa_{t,u,\mathbf a}\) is a composition of left and right translations,
with one factor for each enclosing binary operation on the chain from \(u\) to
\(t\).
\end{theorem}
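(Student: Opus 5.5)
The plan is to argue by induction on the depth \(d\) of the occurrence \(u\) in the syntax tree of \(t\), that is, on the number of binary nodes strictly above \(u\) on the path from \(u\) to the root. Along the way we keep explicit track of which translation each enclosing node contributes. If \(d=0\), then \(u=t\) and the context map is the identity on \(A\). This is the empty composition, so it lies in \(T(A)\), since \(T(A)\) is a monoid and contains \(\mathrm{id}_A\).

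For the inductive step, let \(v\) be the parent occurrence of \(u\), so \(v=u\star s\) or \(v=s\star u\) for a sibling subterm occurrence \(s\). The variables of \(s\) are disjoint from those of \(u\) and external to \(u\). Once the external assignment \(\mathbf a\) is fixed, \(s\) therefore evaluates to a single constant \(b=s^A(\mathbf a|_s)\in A\). Every variable external to \(v\) is also external to \(u\), so \(\mathbf a\) restricts to an external assignment \(\mathbf a'\) for \(v\).

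We then verify the factorisation
\[
\kappa_{t,u,\mathbf a}=\kappa_{t,v,\mathbf a'}\circ R_b
\quad\text{or}\quad
\kappa_{t,u,\mathbf a}=\kappa_{t,v,\mathbf a'}\circ L_b,
\]
according as \(u\) is the left or the right child of \(v\). This holds because evaluating \(t\) with \(u\) replaced by an input \(x\) first computes the value of \(v\), which is \(x\star b\) or \(b\star x\). The rest of the evaluation depends only on that value and on \(\mathbf a'\). The occurrence \(v\) has depth \(d-1\), so by the induction hypothesis \(\kappa_{t,v,\mathbf a'}\) is a composition of translations, one for each binary node on the chain from \(v\) to the root. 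Composing with \(R_b\) or \(L_b\) adds exactly one factor for the node \(v\). Since \(T(A)\) is closed under composition, \(\kappa_{t,u,\mathbf a}\in T(A)\), and the "more precisely" clause follows.

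Unwinding the induction, if the chain is \(u=v_0\subset v_1\subset\cdots\subset v_d=t\), we get
\(\kappa_{t,u,\mathbf a}=T_d\circ\cdots\circ T_1\). Here \(T_i=R_{b_i}\) when \(v_{i-1}\) is the left child of \(v_i\), and \(T_i=L_{b_i}\) when it is the right child, with \(b_i\) the value of the sibling of \(v_{i-1}\) under \(\mathbf a\). This matches the example, where \(L_0\circ R_1\) and \(R_1\circ L_0\) arise.

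The only real obstacle is the bookkeeping in the factorisation step. We must check that the sibling's value depends only on \(\mathbf a\), which follows because its variables are disjoint from those of \(u\). We must also check that the restricted assignment \(\mathbf a'\) is a legitimate external assignment for \(v\), so that the induction hypothesis applies to the same tree \(t\). No non-associativity issues arise, since we never rebracket: everything follows from the recursive definition of \(t^A\) along the tree.
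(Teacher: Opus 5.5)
Your proposal is correct and follows essentially the same route as the paper. Both argue by induction on the distance from \(u\) to the root, peel off the parent \(v\), evaluate the sibling to a constant \(b\), and factor the context map as the context map of \(v\) composed with \(L_b\) or \(R_b\). Your explicit restricted assignment \(\mathbf a'\) and the unwound formula \(T_d\circ\cdots\circ T_1\) simply make the paper's bookkeeping more precise.
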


\begin{proof}
We argue by induction on the distance from \(u\) to \(t\) in the chain of
enclosing subterm occurrences.

If this distance is \(0\), then \(u=t\). In that case the context map is simply
the identity map on \(A\), since the value inserted at \(u\) is already the
value of the whole term \(t\). As \(T(A)\) is a monoid, it contains the
identity transformation, so the claim holds.

Now assume the statement holds whenever the distance is at most \(d\), and let
\(u\) be at distance \(d+1\) from \(t\). Let \(v\) be the parent subterm
occurrence of \(u\). Then exactly one of the following two cases occurs:
\[
v=s\star u
\qquad\text{or}\qquad
v=u\star s,
\]
where \(s\) is the sibling subterm occurrence of \(u\) inside \(v\).

Fix an assignment of values to all variables external to \(u\). In particular,
all variables occurring in \(s\) are fixed, so the value of \(s\) under this
assignment is a constant \(c\in A\).

Suppose first that \(v=s\star u\). Then whenever the variable input to the
context map has value \(x\in A\), the value at \(v\) is
\[
c\star x=L_c(x).
\]
From \(v\) upward to the whole term \(t\), the remaining enclosing context has
distance \(d\). By the induction hypothesis, the induced map from the value at
\(v\) to the value of \(t\) is some element \(g\in T(A)\). Hence the total
context map is
\[
\kappa_{t,u,\mathbf a}=g\circ L_c \in T(A).
\]

The case \(v=u\star s\) is analogous. The value at \(v\) is then
\[
x\star c = R_c(x),
\]
and by the induction hypothesis the remaining enclosing context contributes some
map \(g\in T(A)\). Therefore
\[
\kappa_{t,u,\mathbf a}=g\circ R_c \in T(A).
\]

In both cases the context map lies in \(T(A)\), and each step from \(u\) to
\(t\) contributes exactly one left or right translation. This completes the
induction.
\end{proof}

The preceding theorem shows that the recursive structure of bracketing
evaluations is mediated by the translation monoid. To express the immediate
consequence for evaluation words, we let unary maps act coordinatewise on words:
if \(f:A\to A\) and \(v=v_1v_2\cdots v_r\in A^r\), then
\[
f(v):=f(v_1)f(v_2)\cdots f(v_r).
\]

\begin{corollary}\label{cor:recursive-blocks}
Let \(t\in \mathcal B_n\), and let \(u\) be a subterm occurrence of \(t\). Every
block of the evaluation word \(w_t\) obtained by fixing all variables external
to \(u\) has the form
\[
f(w_u)
\]
for some \(f\in T(A)\). In particular, every recursive block arising in any
bracketing-evaluation array \(\mathcal E_n(A)\) is obtained from the evaluation
word of a subterm by the coordinatewise action of an element of the translation
monoid.
\end{corollary}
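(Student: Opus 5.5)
The plan is to derive the corollary directly from Theorem~\ref{thm:context-in-translation-monoid}. The only work is to make precise what ``the block of \(w_t\) obtained by fixing all variables external to \(u\)'' means, and to check that its letters come in the right order.

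Let the internal variables of \(u\) be \(x_{i_1},\dots,x_{i_k}\). Because \(t\) is a bracketing with the order of variables preserved, a subterm occurrence covers a contiguous segment of variables, so \(i_j=i+j\) for some \(i\). Fix an external assignment \(\mathbf a\), that is, values for \(x_1,\dots,x_i\) and \(x_{i+k+1},\dots,x_n\). The block in question is the subsequence of \(w_t\) indexed by the tuples of \(A^n\) that agree with \(\mathbf a\) outside positions \(i+1,\dots,i+k\). These tuples are listed in lexicographic order on \(A^n\). Since they share their prefix and suffix coordinates, their relative order is the lexicographic order of their middle segment \((y_1,\dots,y_k)\in A^k\). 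This ordering step is the only point needing care. If one reads ``block'' as a contiguous factor, that holds exactly when the suffix is empty; otherwise the block should be read as this subsequence, and either reading gives the same ordered list.

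Next I evaluate term by term. For each middle segment \(\mathbf y\), the value of \(t^A\) at the assembled tuple is obtained in two steps. First evaluate \(u\) at \(\mathbf y\), which gives \(\widetilde u^A(\mathbf y)\). Then feed this value into the context, which gives \(\kappa_{t,u,\mathbf a}(\widetilde u^A(\mathbf y))\). This is exactly the definition of the context map, since evaluation of a term is compositional along its tree. Listing the values over \(\mathbf y\in A^k\) in lexicographic order therefore shows that the block equals \(\kappa_{t,u,\mathbf a}(w_u)\), with the map acting coordinatewise.

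Finally, set \(f=\kappa_{t,u,\mathbf a}\). By Theorem~\ref{thm:context-in-translation-monoid}, \(f\in T(A)\). The ``in particular'' clause follows by applying this to every \(t\in\mathcal B_n\), every subterm occurrence \(u\), and every external assignment \(\mathbf a\).
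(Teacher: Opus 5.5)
Your proposal is correct and follows essentially the same route as the paper: you identify the block with the coordinatewise image of \(w_u\) under the context map \(\kappa_{t,u,\mathbf a}\), and then invoke Theorem~\ref{thm:context-in-translation-monoid} to place that map in \(T(A)\). You also check that tuples sharing the same prefix and suffix appear in the lexicographic order of their middle segment, so the block is in general a subsequence rather than a contiguous factor; the paper's own example with \(x_1=0,\ x_3=1\) confirms this, and it is a detail the paper leaves implicit.
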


\begin{proof}
Fix values of all variables external to \(u\). As the internal variables of
\(u\) run through all possible values in lexicographic order, the values of the
subterm \(u\) form the word \(w_u\). By
Theorem~\ref{thm:context-in-translation-monoid}, the corresponding context map
from the value of \(u\) to the value of \(t\) is some transformation
\(f\in T(A)\). Applying this transformation coordinatewise to the word \(w_u\)
produces exactly the corresponding block of \(w_t\). The final statement follows
immediately.
\end{proof}

\begin{remark}
Corollary~\ref{cor:recursive-blocks} identifies \(T(A)\) as the basic algebraic
object underlying recursive evaluation in finite binary algebras. It is
therefore natural to ask how the semigroup-theoretic structure of \(T(A)\),
including its ideals and Green classes, reflects the recursive complexity of the
families \(\E_n(A)\).
\end{remark}

\newpage

\section{Realised context maps and first semigroup consequences}

We now make precise the relation between recursive subterm contexts and the
translation monoid. The main point is that no gap remains: every element of the
translation monoid is realised by a context map.

\begin{definition}
Let \(A=(A,\star)\) be a finite binary algebra. Denote by \(\mathcal C(A)\) the
set of all unary maps \(A\to A\) arising as context maps
\[
\kappa_{t,u,\mathbf a}:A\to A
\]
from subterm occurrences \(u\) in bracketings \(t\), under arbitrary external
assignments \(\mathbf a\).
\end{definition}

By Theorem~\ref{thm:context-in-translation-monoid}, one already has
\[
\mathcal C(A)\subseteq T(A).
\]
The reverse inclusion also holds.
\begin{proposition}\label{prop:C-equals-T}
For every finite binary algebra \(A\),
\[
\mathcal C(A)=T(A).
\]
\end{proposition}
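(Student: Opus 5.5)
Since Theorem~\ref{thm:context-in-translation-monoid} already gives \(\mathcal C(A)\subseteq T(A)\), we only need the reverse inclusion \(T(A)\subseteq\mathcal C(A)\). Every element of \(T(A)\) is either the identity or a finite composite
\[
f=T_k\circ T_{k-1}\circ\cdots\circ T_1,\qquad T_j\in\{L_a,R_a:a\in A\},
\]
with \(T_1\) applied first. The plan is to build, for each such word, an explicit bracketing \(t\), a subterm occurrence \(u\) and an external assignment \(\mathbf a\) with \(\kappa_{t,u,\mathbf a}=f\). The identity is realised by \(u=t=x_1\), the distance-\(0\) case in the proof of Theorem~\ref{thm:context-in-translation-monoid}.

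For \(k\geq 1\) we construct terms \(u_0,u_1,\dots,u_k\) recursively. Set \(u_0=y\), a single variable. For \(j\geq 1\), let \(u_j=z_j\star u_{j-1}\) if \(T_j=L_{a_j}\), and \(u_j=u_{j-1}\star z_j\) if \(T_j=R_{a_j}\), where the \(z_j\) are fresh single variables. Put \(t=u_k\), which contains \(k+1\) variable occurrences. We then rename all leaves \(x_1,\dots,x_{k+1}\) in left-to-right order. This gives a genuine element of \(\mathcal B_{k+1}\), because the construction never permutes leaves: each new variable is placed at the far left or far right of the current term, so the order-preserving requirement is automatic. Take \(u\) to be the occurrence of \(y\), and let \(\mathbf a\) assign the value \(a_j\) to the variable \(z_j\).

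The chain from \(u\) to \(t\) is exactly \(u_0\subset u_1\subset\cdots\subset u_k\). At step \(j\) the sibling subterm is the single variable \(z_j\), whose value is the constant \(a_j\). Applying the inductive description in the proof of Theorem~\ref{thm:context-in-translation-monoid} step by step gives
\[
\kappa_{t,u,\mathbf a}=T_k\circ\cdots\circ T_1=f.
\]
To avoid depending on the precise wording of that proof, one can also verify this directly by induction on \(j\): the value of \(u_j\) equals \(T_j\circ\cdots\circ T_1(x)\) when \(y=x\).

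The only real obstacle is bookkeeping. We must check that the constructed term is a legitimate full bracketing with variables in order, and that the order of composition matches the nesting, with the innermost operation corresponding to the first map applied. A subtlety is that each sibling must take the prescribed constant value. Choosing every sibling to be a single variable sidesteps this completely, since any value \(a_j\in A\) can be assigned to it, so no assumption on surjectivity of \(\star\) is needed. With both inclusions established, \(\mathcal C(A)=T(A)\).
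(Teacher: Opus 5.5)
Your proposal is correct and follows essentially the same route as the paper. The paper builds a one-hole context \(Q_k[z]\) by wrapping \(a\star(\cdot)\) or \((\cdot)\star a\) around the hole for each factor, then replaces the constants by fresh external variables assigned back to those constants, which is exactly your chain \(u_0\subset u_1\subset\cdots\subset u_k\). Your explicit treatment of the identity and of the left-to-right relabelling into \(\mathcal B_{k+1}\) only spells out bookkeeping that the paper leaves implicit.
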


\begin{proof}
The inclusion \(\mathcal C(A)\subseteq T(A)\) is exactly
Theorem~\ref{thm:context-in-translation-monoid}.

Conversely, let \(f\in T(A)\). Then \(f\) is a composition of left and right
translations, say
\[
f=\varphi_k\circ\cdots\circ \varphi_1,
\qquad
\varphi_i\in \{L_a,R_a:a\in A\}.
\]
We construct a one-hole context \(Q[z]\) recursively. Set
\[
Q_0[z]=z.
\]
If \(\varphi_i=L_a\), define
\[
Q_i[z]=a\star Q_{i-1}[z].
\]
If \(\varphi_i=R_a\), define
\[
Q_i[z]=Q_{i-1}[z]\star a.
\]
Then, by induction on \(i\), the unary map induced by \(Q_i[z]\) is
\[
\varphi_i\circ\cdots\circ\varphi_1.
\]
Hence \(Q_k[z]\) induces \(f\).

Finally, replace each constant occurring in \(Q_k[z]\) by a fresh external
variable, and let \(u\) be a single variable in place of \(z\). Under the
external assignment sending those fresh variables back to the chosen constants,
the resulting subterm context map is exactly \(f\). Therefore
\[
f\in \mathcal C(A).
\]
So \(T(A)\subseteq \mathcal C(A)\), and equality follows.
\end{proof}

Thus the recursive bracketing process sees the whole translation monoid, and not
merely a distinguished subset of it.

\newpage

\begin{corollary}\label{cor:every-element-realized}
For every \(f\in T(A)\), there exist an integer \(n\geq 1\), a bracketing
\(t\in \mathcal B_n\), and a subterm occurrence \(u\) of \(t\) such that a
recursive block of \(w_t\) is equal to
\[
f(w_u).
\]
In particular, every element of \(T(A)\) occurs as a recursive block map in
some bracketing-evaluation array.
\end{corollary}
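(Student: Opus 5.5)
The plan is to combine Proposition~\ref{prop:C-equals-T} with Corollary~\ref{cor:recursive-blocks}, keeping track of the actual bracketing and variable indexing that the realisation produces. Let \(f\in T(A)\). By Proposition~\ref{prop:C-equals-T}, \(f\in\mathcal C(A)\). So there is a bracketing \(t\), a subterm occurrence \(u\) of \(t\), and an external assignment \(\mathbf a\) with \(\kappa_{t,u,\mathbf a}=f\).

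To be concrete, I would build \(t\) explicitly from a factorisation \(f=\varphi_k\circ\cdots\circ\varphi_1\).
\begin{itemize}
\item Take the context \(Q_k[z]\) from the proof of Proposition~\ref{prop:C-equals-T}.
\item Replace each constant by a fresh variable and \(z\) by a single variable.
\item Relabel all variables \(x_1,\dots,x_n\) (\(n=k+1\)) according to their left-to-right order of occurrence.
\end{itemize}
This yields a full binary bracketing \(t\in\mathcal B_n\), since each step of \(Q\) joins exactly two subterms and the variable order is preserved. The occurrence \(u=x_j\) is the relabelled hole. If \(f\) is the identity (the case \(k=0\)), take \(n=1\), \(t=u=x_1\), with empty external assignment.

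Next, let \(\mathbf a\) be the assignment sending each external variable to the constant it replaced. By the induction in the proof of Proposition~\ref{prop:C-equals-T}, equivalently by Theorem~\ref{thm:context-in-translation-monoid} applied along the chain from \(u\) to \(t\), we get \(\kappa_{t,u,\mathbf a}=f\). Corollary~\ref{cor:recursive-blocks}, or rather its proof, shows that the block of \(w_t\) obtained by fixing the external variables to \(\mathbf a\) is \(\kappa_{t,u,\mathbf a}(w_u)=f(w_u)\). Here \(w_u=a_1a_2\cdots a_m\), because \(u\) is a single variable.

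The only point needing care, and the one I expect to be the main obstacle, is making precise that ``the block obtained by fixing external variables'' is a well-defined piece of \(w_t\). When \(u\) is not a suffix of the variable list, the positions with external coordinates fixed are not contiguous in lexicographic order. They form an arithmetic subsequence with stride \(m^{n-j}\).

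I would handle this in one of two ways. The first is to interpret ``block'' as that subsequence, consistent with the usage in Corollary~\ref{cor:recursive-blocks}. Alternatively, one can note that since \(u\) is a single variable, the \(m\) relevant positions are read in increasing order of the value of \(x_j\), so they spell \(f(a_1)\cdots f(a_m)=f(w_u)\) in order. Either way, the claimed block equals \(f(w_u)\), and the final sentence of the corollary follows.
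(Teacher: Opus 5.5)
Your proposal is correct and follows the same route as the paper: Proposition~\ref{prop:C-equals-T} supplies a context realising \(f\), and Corollary~\ref{cor:recursive-blocks} turns that context into the block \(f(w_u)\). Your explicit construction of \(t\), your separate treatment of the identity, and your observation that the ``block'' is in general a stride-\(m^{n-j}\) subsequence rather than a contiguous factor are sound refinements of points the paper leaves implicit.
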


\begin{proof}
Let \(f\in T(A)\). By Proposition~\ref{prop:C-equals-T}, there exist a
bracketing \(t\), a subterm occurrence \(u\) of \(t\), and an external
assignment such that the associated context map is exactly \(f\). By
Corollary~\ref{cor:recursive-blocks}, the corresponding recursive block of
\(w_t\) is therefore
\[
f(w_u).
\]
Hence \(f\) occurs as a recursive block map in the bracketing-evaluation array
containing \(w_t\).
\end{proof}

\begin{corollary}\label{cor:all-ideals-and-Jclasses-realized}
Every nonempty ideal of \(T(A)\) and every \(\mathcal J\)-class of \(T(A)\)
contains an element that occurs as a recursive block map in some
bracketing-evaluation array.
\end{corollary}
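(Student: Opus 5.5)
This corollary follows almost immediately from Corollary~\ref{cor:every-element-realized}. That result says every element of \(T(A)\) occurs as a recursive block map. So it suffices to show that every nonempty ideal and every \(\mathcal J\)-class of \(T(A)\) is a nonempty subset of \(T(A)\). Any element of such a subset is then realised.

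For ideals, nonemptiness is part of the hypothesis. Pick any \(f\) in the ideal \(I\subseteq T(A)\). By Corollary~\ref{cor:every-element-realized}, there are \(n\), a bracketing \(t\in\mathcal B_n\), and a subterm occurrence \(u\) with a recursive block of \(w_t\) equal to \(f(w_u)\). Hence \(I\) contains a recursive block map. No ideal structure is actually used; the argument works for any nonempty subset.

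For \(\mathcal J\)-classes, recall that the \(\mathcal J\)-class of an element \(g\in T(A)\) is
\[
J_g=\{h\in T(A): T(A)\,h\,T(A)=T(A)\,g\,T(A)\},
\]
and \(g\in J_g\) by reflexivity. So every \(\mathcal J\)-class is nonempty, with \(g\) as a canonical element. Applying Corollary~\ref{cor:every-element-realized} to \(g\) shows that \(J_g\) contains a realised block map.

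There is no real obstacle. The only care needed is to use the convention that ideals here are subsets of \(T(A)\) and \(\mathcal J\)-classes are equivalence classes of \(T(A)\) itself, rather than of some ambient semigroup such as \(A^A\). Under that convention the inclusion into \(T(A)\) holds, and Proposition~\ref{prop:C-equals-T} applies. One could add a remark that the statement is really a sharper form of the fact that \(\mathcal C(A)=T(A)\) meets every nonempty subset of \(T(A)\).
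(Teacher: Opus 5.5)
Your proof is correct and is essentially the paper's argument: choose any element of the nonempty ideal or \(\mathcal J\)-class and apply Corollary~\ref{cor:every-element-realized} to it. Your explicit remark that every \(\mathcal J\)-class is nonempty by reflexivity fills in a detail the paper leaves implicit.
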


\begin{proof}
Let \(X\) be either a nonempty ideal of \(T(A)\) or a \(\mathcal J\)-class of
\(T(A)\). Choose \(f\in X\). By
Corollary~\ref{cor:every-element-realized}, the element \(f\) occurs as a
recursive block map in some bracketing-evaluation array. Therefore \(X\)
contains such an element.
\end{proof}

We now record the first semigroup-theoretic consequences.

\begin{definition}
For \(f\in T(A)\), define the rank of \(f\) by
\[
\rho(f)=|f(A)|.
\]
\end{definition}

\begin{proposition}\label{prop:rank-monotonicity}
For all \(f,g,h\in T(A)\),
\[
\rho(g\circ f\circ h)\leq \rho(f).
\]
\end{proposition}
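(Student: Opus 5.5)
The plan is to prove the inequality in two elementary steps about images of maps on the finite set \(A\); no property of \(T(A)\) beyond its being a set of self-maps of \(A\) is used. Throughout, \(\rho(f)=|f(A)|\) as in the definition just given.

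\textbf{Step 1 (precomposition).} First we would show \(\rho(f\circ h)\le\rho(f)\). Since \(h(A)\subseteq A\), we have
\[
(f\circ h)(A)=f\bigl(h(A)\bigr)\subseteq f(A),
\]
so \(|(f\circ h)(A)|\le|f(A)|\).

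\textbf{Step 2 (postcomposition).} Next we would show \(\rho(g\circ k)\le\rho(k)\) for any \(k\in A^A\). We have \((g\circ k)(A)=g\bigl(k(A)\bigr)\). The image of a finite set under any map has cardinality at most that of the set, since \(g\) restricted to \(k(A)\) is a surjection onto \(g(k(A))\). Hence \(|g(k(A))|\le|k(A)|\).

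\textbf{Combining.} Applying Step 2 with \(k=f\circ h\) and then Step 1 gives
\[
\rho(g\circ f\circ h)\le\rho(f\circ h)\le\rho(f).
\]

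There is no real obstacle. The only point needing care is the direction of composition: \(h\) acts first and can only shrink the domain that \(f\) sees, while \(g\) acts last and can only collapse points of the image. These two mechanisms are handled separately in Steps 1 and 2.

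Finally, we would remark that the same argument holds verbatim in the full transformation monoid \(A^A\), so membership in \(T(A)\) is not needed. This is the monotonicity that the subsequent ideal chain by rank will rely on.
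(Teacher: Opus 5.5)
Your proof is correct and is essentially the paper's argument: both rest on the inclusion \(f(h(A))\subseteq f(A)\) together with the fact that a map cannot increase the cardinality of a finite set. The paper derives \((g\circ f\circ h)(A)\subseteq g(f(A))\) and leaves the bound \(|g(f(A))|\le|f(A)|\) implicit, whereas you state and justify that step explicitly in your Step 2.
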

\begin{proof}
Since \(h(A)\subseteq A\), we have
\[
f(h(A))\subseteq f(A).
\]
Applying \(g\) to both sides yields
\[
g(f(h(A)))\subseteq g(f(A)).
\]
Hence
\[
(g\circ f\circ h)(A)=g(f(h(A)))\subseteq g(f(A)).
\]
Therefore
\[
\rho(g\circ f\circ h)=|(g\circ f\circ h)(A)|
\leq |f(A)|
=\rho(f).
\]
\end{proof}

\begin{corollary}\label{cor:rank-ideals}
For each integer \(r\) with \(1\leq r\leq |A|\), the set
\[
I_r=\{f\in T(A):\rho(f)\leq r\}
\]
is a two-sided ideal of \(T(A)\).
\end{corollary}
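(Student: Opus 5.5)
The plan is to verify the ideal property directly from Proposition~\ref{prop:rank-monotonicity}, after checking that \(I_r\) is nonempty. Note that the statement is about \emph{two-sided} ideals, so we need closure under multiplication by arbitrary elements of \(T(A)\) on both sides, simultaneously or separately.

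First, nonemptiness. Since \(r\geq 1\) and \(|A|\ge 1\), every transformation \(f\) of \(A\) has \(1\le \rho(f)\le |A|\). For \(r=|A|\) the set \(I_r\) is all of \(T(A)\), which contains the identity. For smaller \(r\) nonemptiness is not automatic: \(I_r\) might be empty if every element of \(T(A)\) has rank exceeding \(r\), for instance when \(T(A)\) is a group. I would therefore read ``ideal'' in the standard convention that allows the empty set. Alternatively, I would note that the claim is just closure, with \(I_r\) nonempty whenever \(r\) is at least the minimum rank occurring in \(T(A)\). Settling this convention is the only delicate point, and I would state it explicitly.

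Second, closure. Take \(f\in I_r\) and \(g,h\in T(A)\). By Proposition~\ref{prop:rank-monotonicity}, \(\rho(g\circ f\circ h)\le\rho(f)\le r\), so \(g\circ f\circ h\in I_r\). Specialising \(h=\mathrm{id}_A\) gives left closure, \(g\circ f\in I_r\), and \(g=\mathrm{id}_A\) gives right closure, \(f\circ h\in I_r\). Both specialisations are legitimate because \(T(A)\) is a monoid and so contains the identity. Hence \(T(A)\,I_r\,T(A)\subseteq I_r\), and \(I_r\) is a two-sided ideal.

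Finally, I would remark that \(I_1\subseteq I_2\subseteq\cdots\subseteq I_{|A|}=T(A)\), which gives the chain of ideals mentioned in the abstract.
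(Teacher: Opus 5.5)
Your proof is correct and is the paper's argument: take \(f\in I_r\) and \(g,h\in T(A)\), then Proposition~\ref{prop:rank-monotonicity} gives \(\rho(gfh)\le\rho(f)\le r\). Your caveat about nonemptiness is a fair point that the paper leaves implicit (for example, \(I_1=\emptyset\) when \(A\) is a nontrivial group), and it matches the paper's later phrase ``minimal nonempty two-sided ideal'', which presupposes that empty ideals are allowed.
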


\begin{proof}
Let \(f\in I_r\) and \(g,h\in T(A)\). By
Proposition~\ref{prop:rank-monotonicity},
\[
\rho(gfh)\leq \rho(f)\leq r.
\]
So \(gfh\in I_r\), proving that \(I_r\) is a two-sided ideal.
\end{proof}

\begin{proposition}\label{prop:min-rank-single-J-class}
Let \(f,g\in T(A)\) satisfy
\[
\rho(f)=\rho(g)=r_{\min},
\]
where
\[
r_{\min}=\min\{\rho(h):h\in T(A)\}.
\]
Then
\[
f\,\mathcal J\, g.
\]
\end{proposition}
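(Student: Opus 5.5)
Since \(T(A)\) is a finite monoid, every two-sided ideal is in particular a subsemigroup, and \(f\,\mathcal J\,g\) means \(T(A)fT(A)=T(A)gT(A)\). So the plan is to show that each of \(f,g\) generates the minimum ideal of \(T(A)\). It suffices to show \(g\in T(A)fT(A)\); the reverse containment follows by symmetry.

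\textbf{Step 1: the minimal ideal.} A finite semigroup has a unique minimal two-sided ideal \(K\), its kernel. It exists because the intersection of two ideals \(I,J\) contains \(IJ\ne\emptyset\). By Corollary~\ref{cor:rank-ideals}, \(I_{r_{\min}}\) is an ideal. By minimality \(K\subseteq I_{r_{\min}}\), so every element of \(K\) has rank exactly \(r_{\min}\).

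\textbf{Step 2: minimum-rank elements lie in \(K\).} Take \(f\) with \(\rho(f)=r_{\min}\) and pick any \(k\in K\). Then \(fk\in K\), and \(K\) is a completely simple semigroup (Rees--Suschkewitsch). Standard kernel theory says that for \(x\in K\) and any \(s\) in the monoid, \(x\,\mathcal J\,xs\,\mathcal J\,sx\) inside \(K\). This does not yet put \(f\) in \(K\). The route I would take is rank-theoretic.

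Consider \(e=(kf)^{\omega}\), an idempotent power of \(kf\). Since \(kf\in K\), we have \(e\in K\). Both \(\rho(f)\) and \(\rho(e)\) equal \(r_{\min}\), and \(e(A)\subseteq k(\cdots)\). To get \(f\) back I would look instead at \(f e'\) with \(e'=(fk)^\omega\in K\), which is an idempotent. Then \(e'\) acts as the identity on its image \(e'(A)=(fk)^\omega(A)\subseteq f(A)\). Both \(e'(A)\) and \(f(A)\) have size \(r_{\min}\), so \(e'(A)=f(A)\). Hence \(e'f=f\), and therefore \(f\in KT(A)\subseteq K\).

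This idempotent-image argument is the main step. The key point is that \(\rho\) cannot drop below \(r_{\min}\), so every product of minimum-rank elements keeps full minimal rank. This forces the image inclusions \(e'(A)\subseteq f(A)\) to be equalities.

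\textbf{Step 3: conclude.} Both \(f\) and \(g\) lie in \(K\). Since \(K\) is the minimal ideal, \(T(A)fT(A)\) is an ideal contained in \(K\), hence equals \(K\). The same holds for \(g\). So \(T(A)fT(A)=T(A)gT(A)\), i.e. \(f\,\mathcal J\,g\).

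The expected obstacle is Step 2. The steps needing care are verifying that \((fk)^\omega\) lies in \(K\) (it does, since \(k\in K\) and \(K\) is an ideal) and that its image lies inside \(f(A)\) (it does, since the leftmost factor is \(f\)).
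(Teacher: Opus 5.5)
Your argument is correct, but it is organised differently from the paper's. The paper never mentions the kernel. It shows $\rho(gf)=r_{\min}$, hence $\operatorname{Im}(gf)=\operatorname{Im}(g)$, and argues that $gf$ permutes $\operatorname{Im}(g)$, so that $(gf)^n g=g$ for some $n\geq 1$. It then reads off $g=g\,f\,(gf)^{n-1}g\in T(A)fT(A)$ and finishes by symmetry. You instead pass through the minimum ideal $K$ of the finite semigroup $T(A)$. Step~1 puts $K$ inside the minimum-rank layer, Step~2 puts every minimum-rank element inside $K$, and Step~3 identifies both principal ideals with $K$. The engine of Step~2 is the paper's mechanism in idempotent form: rank minimality upgrades $(fk)^{\omega}(A)\subseteq f(A)$ to an equality, and an idempotent fixes its image pointwise, so $(fk)^{\omega}f=f$.

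Your route buys two things. First, it proves at the same time that the minimum-rank layer \emph{equals} the kernel $K(T(A))$, which is Corollary~\ref{cor:minimal-ideal} in its sharpest form. Second, the idempotent formulation sidesteps a check the paper leaves implicit, namely that $gf$ maps $\operatorname{Im}(g)$ \emph{onto} itself; this needs $\rho(gfg)=r_{\min}$, not merely $\operatorname{Im}(gf)=\operatorname{Im}(g)$.

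The cost is an imported fact (existence of the kernel) and unnecessary length. The image equality in Step~2 uses only $\rho\bigl((fk)^{\omega}\bigr)=r_{\min}$, which holds for every $k\in T(A)$. Taking $k=g$ therefore gives $f=(fg)^{\omega}f\in T(A)gT(A)$ directly, and Steps~1 and~3, the appeal to Rees--Suschkewitsch, and the $(kf)^{\omega}$ detour can all be dropped.
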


\begin{proof}
We first show that
\[
\rho(gf)=r_{\min}.
\]
Indeed, by Proposition~\ref{prop:rank-monotonicity},
\[
\rho(gf)\leq \rho(f)=r_{\min}.
\]
Since \(gf\in T(A)\) and \(r_{\min}\) is the minimum rank of an element of \(T(A)\), 
it follows that \(\rho(gf)=r_{\min}\). Similarly,
\[
\rho(fg)=r_{\min}.
\]
Now
\[
\operatorname{Im}(gf)\subseteq \operatorname{Im}(g),
\]
and both sets have cardinality \(r_{\min}\). Hence
\[
\operatorname{Im}(gf)=\operatorname{Im}(g).
\]
It follows that the restriction of \(gf\) to \(\operatorname{Im}(g)\) is a
surjective self-map of the finite set \(\operatorname{Im}(g)\), hence a
bijection. Therefore some power \((gf)^n\) acts as the identity on \(\operatorname{Im}(g)\).
Since \(g(A)=\operatorname{Im}(g)\), we obtain
\[
(gf)^n g = g.
\]
Thus
\[
g=(gf)^n g = g\bigl(f(gf)^{\,n-1}g\bigr),
\]
so \(g\in T(A)fT(A)\).\\\\
By symmetry, interchanging the roles of \(f\) and \(g\), we also obtain
\[
f\in T(A)gT(A).
\]
Hence
\[
T(A)fT(A)=T(A)gT(A),
\]
that is,
\[
f\,\mathcal J\, g.
\]
\end{proof}

\begin{corollary}\label{cor:minimal-ideal}
Let
\[
r_{\min}=\min\{\rho(f):f\in T(A)\}.
\]
Then
\[
K(T(A))=\{f\in T(A):\rho(f)=r_{\min}\}
\]
is a minimal nonempty two-sided ideal of \(T(A)\).
\end{corollary}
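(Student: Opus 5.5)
The plan is to split the claim into two parts: first that \(K=K(T(A))\) is a nonempty two-sided ideal, and then that it is minimal among nonempty two-sided ideals.

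For the first part, \(K\) is nonempty because \(T(A)\) is a nonempty finite set. Hence the minimum \(r_{\min}\) is attained by some element. To see that \(K\) is an ideal, take \(f\in K\) and \(g,h\in T(A)\). By Proposition~\ref{prop:rank-monotonicity}, \(\rho(gfh)\le\rho(f)=r_{\min}\). By minimality of \(r_{\min}\) we also have \(\rho(gfh)\ge r_{\min}\), so \(gfh\in K\). Equivalently, \(K=I_{r_{\min}}\), since no element has rank below \(r_{\min}\), and Corollary~\ref{cor:rank-ideals} then gives the ideal property directly. The one-sided products \(gf\) and \(fh\) are the special cases \(h=\mathrm{id}\) and \(g=\mathrm{id}\).

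For minimality, let \(J\subseteq K\) be a nonempty two-sided ideal of \(T(A)\); we must show \(J=K\).
\begin{itemize}
\item Pick \(f\in J\). Then \(f\in K\), so \(\rho(f)=r_{\min}\).
\item Let \(g\in K\) be arbitrary. Proposition~\ref{prop:min-rank-single-J-class} gives \(f\,\mathcal J\,g\), and in particular \(g\in T(A)fT(A)\).
\item Since \(J\) is a two-sided ideal containing \(f\), it contains \(T(A)fT(A)\), so \(g\in J\). Hence \(J=K\).
\end{itemize}
I would also note a stronger form of minimality. If \(J\) is any nonempty ideal, not assumed to lie in \(K\), and \(f\in J\), then for any \(k\in K\) the element \(fk\) lies in \(J\cap K\). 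So every nonempty ideal contains \(K\), and \(K\) is the unique minimal ideal.

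There is no real obstacle, since the substantive work is already done in Proposition~\ref{prop:min-rank-single-J-class}. The only point needing care is a convention: \(T(A)fT(A)\) must include \(f\) itself, which holds because the identity lies in the monoid \(T(A)\).
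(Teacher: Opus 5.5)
Your proof is correct and is essentially the paper's argument: $K(T(A))=I_{r_{\min}}$ is an ideal by Corollary~\ref{cor:rank-ideals}, and minimality follows from Proposition~\ref{prop:min-rank-single-J-class}, where your step $g\in T(A)fT(A)\subseteq J$ is just the paper's remark that every ideal is a union of $\mathcal J$-classes. Your additional observation that $fk\in J\cap K$ forces $K\subseteq J$ for every nonempty ideal $J$ is also correct; it shows that $K$ is the unique minimal ideal, which the paper does not state explicitly.
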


\begin{proof}
By Corollary~\ref{cor:rank-ideals}, the set \(K(T(A))\) is a two-sided ideal.
By Proposition~\ref{prop:min-rank-single-J-class}, all elements of \(K(T(A))\)
belong to the same \(\mathcal J\)-class. Hence \(K(T(A))\) is a single \(\mathcal J\)-class.
 Since every two-sided ideal is a union of \(\mathcal J\)-classes, it follows that \(K(T(A))\) is a minimal
nonempty two-sided ideal. 
\end{proof}

\begin{proposition}\label{prop:J-same-rank}
If \(f,g\in T(A)\) satisfy
\[
f\,\mathcal J\, g,
\]
then
\[
\rho(f)=\rho(g).
\]
\end{proposition}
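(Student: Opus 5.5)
The plan is to unpack the definition of \(\mathcal J\) and then apply Proposition~\ref{prop:rank-monotonicity} in both directions. Since \(T(A)\) is a monoid, it contains the identity, so \(T(A)fT(A)\) is the principal two-sided ideal generated by \(f\), and \(f\) itself lies in it. By definition, \(f\,\mathcal J\,g\) means \(T(A)fT(A)=T(A)gT(A)\).

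First, because \(g\in T(A)gT(A)=T(A)fT(A)\), there exist \(p,q\in T(A)\) with
\[
g=p\circ f\circ q.
\]
Proposition~\ref{prop:rank-monotonicity} then gives \(\rho(g)=\rho(pfq)\leq\rho(f)\). Second, interchanging the roles of \(f\) and \(g\), we have \(f\in T(A)gT(A)\). This yields \(p',q'\in T(A)\) with \(f=p'gq'\), and hence \(\rho(f)\leq\rho(g)\). Combining the two inequalities gives \(\rho(f)=\rho(g)\).

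There is no real obstacle here. The only point needing care is that \(\mathcal J\) is defined via ideals with identity adjoined. This is harmless, since \(T(A)\) already contains the identity transformation, so \(T(A)^1=T(A)\) and the factorisations \(g=pfq\) and \(f=p'gq'\) exist within \(T(A)\) itself.
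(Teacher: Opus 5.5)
Your proof is correct and follows the paper's own argument: both extract factorisations $g=pfq$ and $f=p'gq'$ from the definition of $\mathcal J$ and apply Proposition~\ref{prop:rank-monotonicity} in each direction. Your remark that $T(A)^1=T(A)$, because $T(A)$ already contains the identity, is a harmless clarification that the paper leaves implicit.
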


\begin{proof}
Since \(f\,\mathcal J\, g\), there exist \(a,b,c,d\in T(A)\) such that
\[
f=agb,
\qquad
g=cfd.
\]
Applying Proposition~\ref{prop:rank-monotonicity} twice gives
\[
\rho(f)\leq \rho(g)
\qquad\text{and}\qquad
\rho(g)\leq \rho(f).
\]
Hence \(\rho(f)=\rho(g)\).
\end{proof}

So each \(\mathcal J\)-class is contained in a single rank layer.

The converse, however, fails in general.

\begin{example}\label{ex:rank-not-J}
Let \(A=\{0,1,2\}\) with binary operation given by
\[
\begin{array}{c|ccc}
\star & 0 & 1 & 2\\ \hline
0 & 1 & 0 & 0\\
1 & 1 & 1 & 1\\
2 & 0 & 1 & 2
\end{array}
\]
Then
\[
L_0=(1,0,0),\qquad L_1=(1,1,1),\qquad L_2=(0,1,2),
\]
and
\[
R_0=(1,1,0),\qquad R_1=(0,1,1),\qquad R_2=(0,1,2),
\]
where maps are written as tuples \((f(0),f(1),f(2))\). A direct calculation shows that:
\[
T(A)=\{
(0,1,2),\,
(1,1,1),\,
(0,0,0),\,
(1,0,0),\,
(1,1,0),\,
(0,1,1),\,
(0,0,1)
\}.
\]
Its \(\mathcal J\)-classes are
\[
\{(0,1,2)\},
\qquad
\{(1,1,1),(0,0,0)\},
\qquad
\{(1,0,0),(0,1,1)\},
\qquad
\{(1,1,0),(0,0,1)\}.
\]
The last two classes both consist of elements of rank \(2\), so equality of
rank does not determine the \(\mathcal J\)-class.
\end{example}
Finally, there is no general ``eventual collapse'' into lower-rank ideals.
\begin{proposition}\label{prop:no-eventual-collapse}
There is no general theorem forcing deeper recursive levels to enter lower-rank
ideals of \(T(A)\).
\end{proposition}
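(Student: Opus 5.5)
This is a negative statement, so the plan is to exhibit a counterexample. It suffices to find a finite binary algebra \(A\) in which context maps of arbitrarily deep recursive levels can still have maximal rank, i.e.\ lie outside every proper rank ideal \(I_r\) with \(r<|A|\). By Theorem~\ref{thm:context-in-translation-monoid}, a context map at depth \(d\) is a composition of \(d\) translations. If some translation \(L_a\) or \(R_a\) is a permutation of \(A\), then every power of it is a permutation of rank \(|A|\). By Proposition~\ref{prop:C-equals-T}, each such power is realised as a context map at exactly that depth. Concretely, the one-hole context \(Q_d[z]=(\cdots((z\star a)\star a)\cdots)\star a\) gives a bracketing \(t\in\mathcal B_{d+1}\) with \(u=x_1\), and its context map is \(R_a^{\,d}\).

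First, I would choose the simplest such algebra, namely the group \(A=\mathbb Z_2\) with \(\star=+\), or even the two-element algebra with \(x\star y=y\star x=x+y \bmod 2\). Here \(R_1\) is the transposition and \(R_0\) is the identity, so every element of \(T(A)\) is a permutation. Then \(T(A)\) is the group \(\{\mathrm{id},(0\,1)\}\), every element has rank \(2=|A|\), and \(r_{\min}=|A|\).

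Second, I would observe that for every depth \(d\geq 1\), every bracketing \(t\), every subterm occurrence \(u\), and every external assignment, the context map lies in \(T(A)\) and hence has rank \(|A|\). So it never lies in \(I_r\) for \(r<|A|\). Via Corollary~\ref{cor:recursive-blocks}, each recursive block \(f(w_u)\) is then just a relabelling of \(w_u\) by a bijection, and no information collapses however deep the nesting is. This refutes any purported general theorem asserting that sufficiently deep recursive levels land in \(I_{|A|-1}\) or in any fixed lower-rank ideal.

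The main obstacle is not computational but expository. The informal statement must be given a precise meaning, namely: ``there is no \(d_0\) such that for all finite \(A\), all context maps at depth \(\geq d_0\) have rank \(<\rho_{\max}\).'' One must also make sure the counterexample rules out even the weaker claim that the ranks of depth-\(d\) contexts are eventually nonincreasing toward \(r_{\min}\). For additional force I would note that the statement is also false for non-group algebras. Example~\ref{ex:rank-not-J} already contains \(L_2=R_2=\mathrm{id}\), so appending \(\star\,2\) repeatedly preserves any rank, including rank \(2\) elements such as \((1,0,0)\). This shows that even when \(T(A)\) has a nontrivial minimal ideal, deep levels need not descend into it.
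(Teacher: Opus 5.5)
Your argument is correct and is essentially the paper's own: the paper takes an arbitrary finite group \(G\), so that every \(L_a,R_a\), and hence every element of \(T(G)\), is a permutation of rank \(|G|\) and there is no proper rank ideal to descend into; your \(\mathbb Z_2\) is a special case of this. Your additional use of Example~\ref{ex:rank-not-J} goes beyond the paper and is in fact \emph{necessary}, not merely ``additional force'', for the stronger reading ``deep levels eventually enter the minimal ideal'': there \(L_2=R_2=\mathrm{id}\), so every realised map (e.g.\ the identity or \((1,0,0)\)) persists at all greater depths while \(r_{\min}=1\), whereas for a group \(K(T(G))=T(G)\) and that reading is trivially true.
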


\begin{proof}
Let \(A=G\) be a finite group, written multiplicatively, and let \(\star\) be
the group multiplication. For each \(a\in G\), both \(L_a\) and \(R_a\) are
permutations of \(G\). Hence every element of \(T(G)\) is a permutation, so
every element has rank \(|G|\). Thus \(T(G)\) has no proper nonempty rank ideal,
and no recursive block map can be forced into a lower-rank layer by increasing
depth.
\end{proof}

For the translation-monoid programme developed here, the basic structural
questions are resolved as follows. The realised context maps are exactly the
elements of \(T(A)\), and hence every element, every nonempty ideal, and every
\(\mathcal J\)-class of \(T(A)\) is realised by recursive block maps. The rank
filtration
\[
I_r=\{f\in T(A):\rho(f)\le r\}
\]
consists of two-sided ideals, and the minimum-rank layer forms a minimal
nonempty two-sided ideal of \(T(A)\). Moreover,
\[
f\,\mathcal J\, g \implies \rho(f)=\rho(g),
\]
although the converse fails in general: equal rank need not determine the
\(\mathcal J\)-class. Finally, there is no general eventual descent of recursive
block maps into lower-rank ideals.

\vspace{36em}

\HRule
{\small
\begin{flushleft}
Oh, come with old Khayyam, and leave the Wise \\
To talk; one thing is certain, that Life flies; \\
     One thing is certain, and the Rest is Lies;  \\
The Flower that once has blown for ever dies.\\
\end{flushleft}
\begin{flushleft}
{\em O. Khayyam}
\end{flushleft}
}
\HRule

\end{document}